\documentclass[12pt,reqno]{amsart}
\usepackage{amssymb}
\usepackage{amsmath, mathtools}

\usepackage{amsthm}

\usepackage{amscd}

\newcommand{\RNum}[1]{\uppercase\expandafter{\romannumeral #1\relax}}

\usepackage{caption}

\usepackage[T2A]{fontenc}
\usepackage[utf8]{inputenc}
\usepackage[english]{babel}

\input{int.def} 

\usepackage[sort]{cite}
\usepackage{tikz-cd}
\usetikzlibrary{cd}
\usepackage{dirtytalk}
\usepackage[linktoc=page, colorlinks, linkcolor=blue, citecolor=blue]{hyperref}

\usepackage{xcolor}
\usepackage{centernot}

\usepackage{enumitem}

\usepackage{pgfplots}
\usepackage{multicol}

\pgfplotsset{compat=1.17}

\makeatletter
\renewenvironment{proof}[1][\proofname]{%
  \par\vspace{\topsep}%
  \normalfont\topsep6\p@\@plus6\p@\relax
  \trivlist
  \item[\hskip\labelsep\itshape #1\@addpunct{.}]\ignorespaces
}{%
  \endtrivlist
}
\makeatother

\numberwithin{equation}{section}

\DeclarePairedDelimiterX \ip[2]{\langle}{\rangle}{#1,#2}
\DeclarePairedDelimiterXPP \Prob[1]{\mathbb{P}}\{\}{}{ #1} 
\DeclarePairedDelimiterXPP \Probevent[1]{\mathbb{P}}(){}{#1} 

\usepackage[mathcal]{euscript}

\usepackage{titlesec}
\titleformat{\section}[runin]{\bfseries}{\thesection.}{3pt}{}[.]

\usepackage{geometry}
\newgeometry{vmargin={25mm}, hmargin={22mm,22mm}, footskip=10mm}   

\begin{document}
\title[TV between Bernoulli products, up to constants]{TV between Bernoulli products, up to constants}

\author{Gleb Smirnov}
\address{
Mathematical Sciences Institute, 
Australian National University, Canberra, Australia}
\email{gleb.smirnov@anu.edu.au}


\begin{abstract}
We obtain upper and lower bounds on the total variation distance between two product Bernoulli measures. The bounds are efficiently
computable and match up to an absolute constant factor.
\end{abstract}

\maketitle
\setcounter{section}{0}

\section{Main result}\label{main_results}

For probability measures \(\mu,\nu\) on a finite set, write:
\[
d_{\mathrm{TV}}(\mu,\nu)
=
\frac12\sum_x|\mu(x)-\nu(x)|.
\]
For \(p=(p_1,\ldots,p_n)\in[0,1]^n\), let
\[
\mathrm{Ber}(p)
=
\bigotimes_{i=1}^n\mathrm{Ber}(p_i),
\]
where \(\mathrm{Ber}(p_i)\) is the Bernoulli law on \(\{0,1\}\)
with parameter \(p_i\).
\smallskip%

Let $p,q\in[0,1]^n$. For $i=1,\ldots,n$, set:
\[
\lambda_i
:=
p_i(1-q_i)+(1-p_i)q_i.
\]
$\lambda_i$ is the probability 
that two independent
$\mathrm{Ber}(p_i)$ and $\mathrm{Ber}(q_i)$ variables disagree.
\smallskip%

Let
\[
D_i\sim\mathrm{Ber}(\lambda_i)
\quad\text{independently},
\]
and let
\[
a_i
:=
\begin{cases}
\dfrac{|p_i-q_i|}{\lambda_i},&\lambda_i>0,\\
0,&\lambda_i=0.
\end{cases}
\]
Set:
\[
G = \sum_{i=1}^n a_i^2 D_i.
\]
Our result is the following statement.

\begin{theorem}\label{main}
For each $n\ge1$ and $p,q\in[0,1]^n$,
\[
c\,\mathbb E\min\{1,\sqrt G\}
\le
d_{\mathrm{TV}}\bigl(\mathrm{Ber}(p),\mathrm{Ber}(q)\bigr)
\le
\mathbb E\min\{1,\sqrt G\},
\]
where \(c > 0\) is an absolute constant.
\end{theorem}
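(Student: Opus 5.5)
The upper bound should follow from a coupling that writes both product measures as mixtures over the same random set of "active" coordinates; the lower bound is the real content. For a single coordinate, let $X_i\sim\mathrm{Ber}(p_i)$ and $Y_i\sim\mathrm{Ber}(q_i)$ be independent and put $D_i=\mathbf 1\{X_i\ne Y_i\}\sim\mathrm{Ber}(\lambda_i)$. Conditioning on $D_i$ gives the following. On $\{D_i=0\}$, both $X_i$ and $Y_i$ have the common law $\mathrm{Ber}(s_i)$ with $s_i=p_iq_i/(1-\lambda_i)$. On $\{D_i=1\}$, $X_i\sim\mathrm{Ber}(\tfrac{1+\sigma_ia_i}{2})$ and $Y_i\sim\mathrm{Ber}(\tfrac{1-\sigma_ia_i}{2})$, where $\sigma_i=\operatorname{sign}(p_i-q_i)$; this uses $p_i(1-q_i)/\lambda_i=\tfrac{1+(p_i-q_i)/\lambda_i}{2}$. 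Hence $\mathrm{Ber}(p)=\mathbb E_D\,P_D$ and $\mathrm{Ber}(q)=\mathbb E_D\,Q_D$ with the same mixing law. In $P_D$ and $Q_D$ the coordinates with $D_i=0$ carry identical factors, and the coordinates with $D_i=1$ form a symmetric pair of products $\bigotimes\mathrm{Ber}(\tfrac{1\pm\sigma_ia_i}{2})$.

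For the upper bound, convexity gives $d_{\mathrm{TV}}\le\mathbb E_D\, d_{\mathrm{TV}}(P_D,Q_D)$. For the symmetric pair I will use the Bhattacharyya coefficient $BC=\prod_{i:D_i=1}\sqrt{1-a_i^2}$ together with $d_{\mathrm{TV}}\le\sqrt{1-BC^2}$. Since $1-\prod(1-a_i^2)\le\min\{1,\sum a_i^2D_i\}$, this yields $d_{\mathrm{TV}}(P_D,Q_D)\le\min\{1,\sqrt G\}$, which is the right-hand inequality with constant exactly $1$.

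For the lower bound, the key task is to show that averaging over $D$ does not wash out the conditional distances. It suffices to prove $d_{\mathrm{TV}}\ge c\min\{1,\mathbb E\sqrt G\}$ or $d_{\mathrm{TV}}\ge c\,\mathbb P(G\ge1)$ in the appropriate regimes, since $\mathbb E\min\{1,\sqrt G\}\le \mathbb P(G\ge 1)+\min\{1,\sqrt{\mathbb E G}\}$-type comparisons should let us reduce to them. Projections only decrease TV, so I will split the coordinates into "large" ones ($a_i\ge1/2$) and "small" ones ($a_i<1/2$) and treat each sub-product separately, losing a factor $2$.

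Small coordinates: I will use a linear test. Take $T=\sum_i w_i(Z_i-\tfrac{p_i+q_i}{2})$ with $w_i=\sigma_i a_i$, and compare means and variances under the two laws. The mean gap is $\sum a_i|p_i-q_i|=\sum a_i^2\lambda_i=\mathbb E G$, and the variance is $\lesssim\sum a_i^2\lambda_i$, because $p_i(1-p_i)\lesssim\lambda_i$ when $a_i$ is small. A Chebyshev/Paley–Zygmund argument then gives $d_{\mathrm{TV}}\gtrsim\min\{1,\sqrt{\mathbb E G}\}$, which dominates $\mathbb E\min\{1,\sqrt G\}$ by Jensen.

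Large coordinates: here $\lambda_i\approx|p_i-q_i|$ up to a factor $2$, so one of the two laws is far more likely to produce the "disagreeing" value in coordinate $i$. I will use the event $A=\{\text{some large coordinate takes the value favoured by }\mathrm{Ber}(p)\}$, after first further splitting by the sign $\sigma_i$. Then $\mathbb P_p(A)-\mathbb P_q(A)\gtrsim 1-\prod(1-\lambda_i)\asymp\mathbb P(\exists i\text{ large}: D_i=1)$, which controls $\mathbb E\min\{1,\sqrt G\}$ restricted to these coordinates, since each such $D_i=1$ contributes $a_i^2\ge1/4$.

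The main obstacle is the small-coordinate case when $\mathbb E G$ is large but $G$ is rarely large. The cleanest fix is to split the small coordinates further into dyadic levels of $a_i$. Within a level I apply the linear test, and if a single level carries a constant fraction of $\mathbb E\min\{1,\sqrt G\}$, the Chebyshev estimate is sharp for it. If the Chebyshev step fails to be sharp, I would fall back on the Hellinger product formula $1-H^2=\prod_i\bigl(\sqrt{p_iq_i}+\sqrt{(1-p_i)(1-q_i)}\bigr)$ combined with $d_{\mathrm{TV}}\ge H^2$ on each dyadic block.
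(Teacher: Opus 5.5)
Your upper bound is correct and matches the paper's argument. Convexity of $d_{\mathrm{TV}}$ over the common mixing law of $D$ gives $d_{\mathrm{TV}}\le\mathbb E\,\tau_D$, which the paper obtains as $d_{\mathrm{TV}}\le\widehat d=\mathbb E\,\tau_D$, and the Bhattacharyya step is identical.

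The lower bound has a genuine gap. You name the issue (averaging over $D$ must not wash out the conditional distances) but never resolve it. Instead you fall back on tests for the original products, and each proposed tool fails.

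\emph{Linear test.} A mean gap $\mathbb EG$ with variances $\lesssim\mathbb EG$ gives, via Chebyshev or Paley--Zygmund, only $d_{\mathrm{TV}}\gtrsim\min\{1,\mathbb EG\}$, not its square root. Mean and variance alone cannot do better: $\delta_0$ versus $(1-\epsilon)\delta_0+\epsilon\delta_1$ has mean gap $\epsilon$ and variances $\le\epsilon$, yet $d_{\mathrm{TV}}=\epsilon$.

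\emph{Your small-coordinate target.} The bound $d_{\mathrm{TV}}\gtrsim\min\{1,\sqrt{\mathbb EG}\}$ is false. Take $n=1$ and $q_1=(1+\eta)p_1$ with $\eta\in(0,1)$ fixed and $p_1\to0$. Then $a_1\to\eta/(2+\eta)<1/2$, $d_{\mathrm{TV}}=|p_1-q_1|$, and $\sqrt{\mathbb EG}=\sqrt{a_1|p_1-q_1|}$, so the ratio is $\sqrt{\lambda_1}\to0$.

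\emph{Dyadic levels and Hellinger.} The same example is a single dyadic level. There both your Chebyshev bound and the Hellinger bound $H^2\asymp\eta^2p_1$ are of order $a_1|p_1-q_1|$, too small by a factor $\asymp\eta$. Moreover, the number of dyadic levels is unbounded, so isolating one level by projection loses an unbounded factor.

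\emph{The obstacle you single out.} It does not actually arise. On small coordinates $\mathrm{Var}_pT,\mathrm{Var}_qT\le\mathbb EG$, so your threshold test already gives $d_{\mathrm{TV}}\ge1-8/\mathbb EG$. The hard regime is $\mathbb EG$ small.

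\emph{Large coordinates.} The union event fails. For $k$ coordinates with $p_i=0.9$, $q_i=0.1$ (so $a_i\approx0.98$), $\mathbb P_p(A)-\mathbb P_q(A)=0.9^k-0.1^k\to0$, while $d_{\mathrm{TV}}\to1$.

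The missing idea is the paper's doubling step, which makes $D$ observable. By the triangle inequality through $Q\otimes Q$, $d_{\mathrm{TV}}(P\otimes Q,\,Q\otimes P)\le2\,d_{\mathrm{TV}}(P,Q)$. After the bijection $(x,y)\mapsto(x,x\oplus y)$, the disagreement pattern $D$ is a coordinate of the observation with the same law under both measures. Hence $d_{\mathrm{TV}}(P\otimes Q,Q\otimes P)=\mathbb E\,\tau_D$ exactly. For $P$ versus $Q$ alone, $D$ is latent and convexity only goes the wrong way.

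It then suffices to prove $\tau_d\ge c\min\{1,\sqrt{G_d}\}$ for each fixed $d$, i.e.\ for one pair of mirror products $R^\pm$. The paper does this with the bounded test $f_t(x)=\exp\{it\sum_i\alpha_iz_i\}$. Its expectations under $R^\pm$ are $\zeta$ and $\bar\zeta$, so $d_{\mathrm{TV}}(R^+,R^-)\ge|\Im\zeta|$. With $t=\varepsilon\min\{\sigma^{-1},\sigma^{-2}\}$, a Taylor expansion of $\log\zeta$ gives $\Im\zeta\ge\frac12t\sigma^2=\frac\varepsilon2\min\{1,\sigma\}$. This is your linear statistic again, but used through a bounded Fourier test rather than through its mean and variance. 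That is exactly what turns $\sigma^2$ into $\sigma$.
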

The constant \(1\) in the upper bound is sharp: for \(n=1\), both sides
equal \(|p_1-q_1|\).
\smallskip%

We write \(X\lesssim Y\) if \(X\le CY\) for an absolute constant \(C\), and \(X\asymp Y\) if both \(X\lesssim Y\) and \(Y\lesssim X\).
\smallskip%

Theorem~\ref{main} also gives a more explicit formula. Let
\begin{equation}\label{levy}
w(t)=\frac1{2\sqrt\pi}\,t^{-3/2}e^{-1/(4t)},\qquad t>0.
\end{equation}
This is a probability density on \((0,\infty)\), and
\begin{equation}\label{levy_laplace}
\int_0^\infty w(t)\,e^{-tu}\,dt=e^{-\sqrt u},\qquad u\ge0.
\end{equation}
Then:
\begin{equation}\label{int_TV}
d_{\mathrm{TV}}\bigl(\mathrm{Ber}(p),\mathrm{Ber}(q)\bigr)
\asymp
\int_0^\infty w(t)
\Bigl(1-\prod_{i=1}^n\bigl(1-\lambda_i+\lambda_ie^{-ta_i^2}\bigr)\Bigr)\,dt.
\end{equation}
Indeed, from
\[
\min\{1,u\}\asymp1-e^{-u},
\]
we get: 
\[
\mathbb E\min\{1,\sqrt G\}\asymp\mathbb E[1-e^{-\sqrt G}].
\]
By \eqref{levy_laplace} and independence,
\[
\mathbb E\bigl[1-e^{-\sqrt G}\bigr]
=
\int_0^\infty w(t)\bigl(1-\mathbb Ee^{-tG}\bigr)\,dt
=
\int_0^\infty w(t)
\Bigl(1-\prod_{i=1}^n\bigl(1-\lambda_i+\lambda_ie^{-ta_i^2}\bigr)\Bigr)\,dt,
\]
and \eqref{int_TV} follows.
\smallskip%

The defining formula for
\(d_{\mathrm{TV}}(\mathrm{Ber}(p),\mathrm{Ber}(q))\)
contains \(2^n\) terms; see Bhattacharyya et al.~\cite{BhattacharyyaEtAl}
for the complexity of exact evaluation.
By contrast, \eqref{int_TV} is a one-dimensional integral of a nonnegative function.
\smallskip%

We now discuss related work.

\smallskip%
Kontorovich~\cite{Kont25} asks for tractable upper and lower bounds on
\(d_{\mathrm{TV}}(\mathrm{Ber}(p),\mathrm{Ber}(q))\) whose ratio is bounded by an absolute constant, uniformly in
\(p,q\) and \(n\). Theorem~\ref{main} gives such bounds.
\smallskip%

Theorem~\ref{main} appeared simultaneously with the independent work of
Avital, Kontorovich, Vershynin, and Zou~\cite{AKVZ}, who obtained an
analytic bound for arbitrary product 
measures. Partial results in this direction include Kontorovich's homogenization bounds~\cite{Kont26} and the regime \(p,q=o(1)\) treated by Avital et al.~\cite{Avital-Kont}.
\smallskip%

On the algorithmic side, Feng et al.~\cite{FengEtAll} gave a
polynomial-time randomized approximation algorithm for TV distance
between product measures. Feng et al.~\cite{FengLiuLiu}
subsequently gave a deterministic 
algorithm. Both have superlinear running time in \(n\). More recently, Anand et al.~\cite{ABG} obtained a randomized linear-time algorithm.
\smallskip%

Our proof is elementary. In \S\,\ref{cnot}, we prove the upper bound. In \S\,\ref{lower_bound}, we prove the matching lower bound. In \S\,\ref{proof_main}, we complete the proof of Theorem~\ref{main}. 
\smallskip%

\textbf{Acknowledgment and AI assistance.}
This paper was developed through extensive interaction with OpenAI's
ChatGPT, which was used to explore and refine proofs, check mathematics, and edit the manuscript.

\section{The upper bound}\label{cnot}

For brevity, write:
\[
P=\mathrm{Ber}(p),
\qquad
Q=\mathrm{Ber}(q).
\]
Let \(P\otimes Q\) denote the law on \(\{0,1\}^n\times\{0,1\}^n\) given by
\[
(P\otimes Q)(x,y)=P(x)Q(y),
\]
that is, the joint law of independent \(X\sim P\), \(Y\sim Q\). Similarly,
\(Q\otimes P\) is the joint law of independent \(X'\sim Q\), \(Y'\sim P\).
Set:
\[
d_{\mathrm{TV}}=d_{\mathrm{TV}}(P,Q),
\qquad
\widehat d=d_{\mathrm{TV}}(P\otimes Q,\,Q\otimes P).
\]
\begin{lemma}\label{doubling}
We have
\[
d_{\mathrm{TV}}\le\widehat d\le2\,d_{\mathrm{TV}}.
\]
\end{lemma}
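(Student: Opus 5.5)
The two inequalities are of different natures. The upper bound is a triangle-inequality argument. The lower bound comes from projecting onto a single coordinate.

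\emph{Upper bound.} Insert the intermediate law $Q\otimes Q$:
\[
\widehat d\le d_{\mathrm{TV}}(P\otimes Q,\,Q\otimes Q)+d_{\mathrm{TV}}(Q\otimes Q,\,Q\otimes P).
\]
Tensoring both arguments with the same probability measure does not change total variation. Indeed,
\[
\tfrac12\sum_{x,y}|P(x)-Q(x)|\,Q(y)=d_{\mathrm{TV}}(P,Q),
\]
and likewise in the other slot. Hence each summand equals $d_{\mathrm{TV}}$, and $\widehat d\le 2d_{\mathrm{TV}}$.

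\emph{Lower bound.} Total variation does not increase under measurable maps (data processing). Concretely, for the projection $\pi(x,y)=x$,
\[
\tfrac12\sum_x\Bigl|\sum_y\bigl(\mu(x,y)-\nu(x,y)\bigr)\Bigr|\le\tfrac12\sum_{x,y}|\mu(x,y)-\nu(x,y)|.
\]
The push-forward of $P\otimes Q$ under $\pi$ is $P$, and that of $Q\otimes P$ is $Q$. Therefore $d_{\mathrm{TV}}(P,Q)\le\widehat d$.

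Neither step is a real obstacle. The only point needing care is the identity $d_{\mathrm{TV}}(\mu\otimes R,\nu\otimes R)=d_{\mathrm{TV}}(\mu,\nu)$, which follows by factoring $R(y)$ out of the absolute value and summing over $y$. Note that nothing here uses the Bernoulli product structure; the lemma holds for arbitrary probability measures $P,Q$ on a finite set.
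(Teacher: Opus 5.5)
Your proof is correct and follows the paper's argument exactly: projection onto the first factor gives $d_{\mathrm{TV}}\le\widehat d$, and the triangle inequality through $Q\otimes Q$ gives $\widehat d\le 2\,d_{\mathrm{TV}}$. You also spell out the identity $d_{\mathrm{TV}}(\mu\otimes R,\nu\otimes R)=d_{\mathrm{TV}}(\mu,\nu)$ and the data-processing inequality, both of which the paper uses without proof.
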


\begin{proof}
The projection \((x,y)\mapsto x\) maps \(P\otimes Q\) to \(P\) and \(Q\otimes P\) to \(Q\). Since taking 
images cannot increase total
variation, \(d_{\mathrm{TV}}\le\widehat d\). Conversely, by the triangle inequality,
\[
\widehat d
\le
d_{\mathrm{TV}}(P\otimes Q,\,Q\otimes Q)
+
d_{\mathrm{TV}}(Q\otimes Q,\,Q\otimes P)
=
2\,d_{\mathrm{TV}},
\]
and the proof follows. \qed
\end{proof}

For probability measures \(\mu,\nu\) on a finite set, let
\[
\mathrm{BC}(\mu,\nu)=\sum_x\sqrt{\mu(x)\nu(x)}.
\]
Note that \(\mathrm{BC}\) is multiplicative over product measures:
\[
\mathrm{BC}(\mu_1\otimes\mu_2,\nu_1\otimes\nu_2)
=
\mathrm{BC}(\mu_1,\nu_1)\,\mathrm{BC}(\mu_2,\nu_2).
\]

\begin{lemma}\label{lecam}
We have
\[
d_{\mathrm{TV}}(\mu,\nu)\le\sqrt{1-\mathrm{BC}(\mu,\nu)^2}.
\]
\end{lemma}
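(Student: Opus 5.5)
The plan is the classical Le Cam argument, via Cauchy--Schwarz. First write
\[
d_{\mathrm{TV}}(\mu,\nu)=\tfrac12\sum_x|\mu(x)-\nu(x)|
=\tfrac12\sum_x\bigl|\sqrt{\mu(x)}-\sqrt{\nu(x)}\bigr|\,\bigl(\sqrt{\mu(x)}+\sqrt{\nu(x)}\bigr),
\]
using the factorization \(a-b=(\sqrt a-\sqrt b)(\sqrt a+\sqrt b)\) for \(a,b\ge0\). Applying Cauchy--Schwarz to the sum over \(x\) gives
\[
d_{\mathrm{TV}}(\mu,\nu)\le\tfrac12\Bigl(\sum_x(\sqrt{\mu(x)}-\sqrt{\nu(x)})^2\Bigr)^{1/2}\Bigl(\sum_x(\sqrt{\mu(x)}+\sqrt{\nu(x)})^2\Bigr)^{1/2}.
\]

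Next, expand both squares using \(\sum_x\mu(x)=\sum_x\nu(x)=1\). This yields
\[
\sum_x(\sqrt{\mu(x)}-\sqrt{\nu(x)})^2=2-2\,\mathrm{BC}(\mu,\nu),
\qquad
\sum_x(\sqrt{\mu(x)}+\sqrt{\nu(x)})^2=2+2\,\mathrm{BC}(\mu,\nu).
\]
The product of the two square roots is therefore \(\sqrt{4(1-\mathrm{BC}(\mu,\nu)^2)}=2\sqrt{1-\mathrm{BC}(\mu,\nu)^2}\). After the prefactor \(\tfrac12\), this is exactly the claimed bound.

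There is no real obstacle here. The only point needing a check is that \(0\le\mathrm{BC}(\mu,\nu)\le1\), so that the square root is real. This follows from Cauchy--Schwarz as well: \(\sum_x\sqrt{\mu(x)\nu(x)}\le(\sum_x\mu(x))^{1/2}(\sum_x\nu(x))^{1/2}=1\). In the paper's application, the lemma will presumably be combined with the multiplicativity of \(\mathrm{BC}\) over products and with Lemma~\ref{doubling}, applied to the symmetrized pair \(P\otimes Q,\ Q\otimes P\). The lemma itself needs none of this.
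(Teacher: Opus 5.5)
Your proof is correct and is the paper's argument: factor $|\mu(x)-\nu(x)|$ as $\bigl|\sqrt{\mu(x)}-\sqrt{\nu(x)}\bigr|\bigl(\sqrt{\mu(x)}+\sqrt{\nu(x)}\bigr)$, apply Cauchy--Schwarz, and use $\sum_x\bigl(\sqrt{\mu(x)}\mp\sqrt{\nu(x)}\bigr)^2=2\mp2\,\mathrm{BC}(\mu,\nu)$. Your check that $\mathrm{BC}(\mu,\nu)\le1$ is a harmless extra; it also follows from $2-2\,\mathrm{BC}(\mu,\nu)$ being a sum of squares.
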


\begin{proof}
By the Cauchy--Schwarz inequality,
\begin{multline*}
d_{\mathrm{TV}}(\mu,\nu)
=
\frac12\sum_x\bigl|\sqrt{\mu(x)}-\sqrt{\nu(x)}\bigr|
\bigl(\sqrt{\mu(x)}+\sqrt{\nu(x)}\bigr)
\le\\
\le
\frac12
\sqrt{\sum_x\bigl(\sqrt{\mu(x)}-\sqrt{\nu(x)}\bigr)^2}\,
\sqrt{\sum_x\bigl(\sqrt{\mu(x)}+\sqrt{\nu(x)}\bigr)^2}
=
\frac12\sqrt{(2-2\,\mathrm{BC})(2+2\,\mathrm{BC})},
\end{multline*}
and the proof follows. \qed
\end{proof}

We now compute \(\widehat d\) by conditioning on the coordinates where the
two samples disagree. Let \(X\sim P\), \(Y\sim Q\) be independent, let
\(X'\sim Q\), \(Y'\sim P\) be independent, and set:
\[
D_i=X_i\oplus Y_i,
\qquad
D'_i=X'_i\oplus Y'_i,
\qquad
i=1,\ldots,n.
\]
Thus \(D_i=1\) when \(X_i\ne Y_i\). Since
\((x,y)\mapsto(x,x\oplus y)\) is a bijection of
\(\{0,1\}^n\times\{0,1\}^n\),
\begin{equation}\label{hat_d_XD}
\widehat d
=
d_{\mathrm{TV}}\bigl(\mathrm{law}(X,D),\,\mathrm{law}(X',D')\bigr).
\end{equation}
The pairs \((X_i,D_i)\), \(i=1,\ldots,n\), are independent, and so are the
pairs \((X'_i,D'_i)\). Their laws are as follows:
\[
\begin{array}{c|c|c}
(x,d) & \mathbb P(X_i=x,\,D_i=d) & \mathbb P(X'_i=x,\,D'_i=d)\\\hline
(0,0) & (1-p_i)(1-q_i) & (1-p_i)(1-q_i)\\
(1,0) & p_iq_i & p_iq_i\\
(0,1) & (1-p_i)q_i & p_i(1-q_i)\\
(1,1) & p_i(1-q_i) & (1-p_i)q_i
\end{array}
\]
Three facts follow from the table.
\smallskip%

First, \(D\) and \(D'\) have the same law: \(D_i,D'_i\sim
\mathrm{Ber}(\lambda_i)\) independently, where
\[
\lambda_i=p_i(1-q_i)+(1-p_i)q_i.
\]
Second, given \(D_i=0\), the conditional law of \(X_i\) coincides with the
conditional law of \(X'_i\) given \(D'_i=0\).
\smallskip%

Third, given \(D_i=1\), the two conditional laws form a mirror pair. For
\(\lambda_i>0\), set:
\[
\alpha_i=\frac{p_i-q_i}{\lambda_i},
\qquad
|\alpha_i|=a_i.
\]
Since
\[
p_i(1-q_i)+(1-p_i)q_i=\lambda_i,
\qquad
p_i(1-q_i)-(1-p_i)q_i=p_i-q_i,
\]
we have:
\[
\mathbb P(X_i=1\mid D_i=1)
=
\frac{p_i(1-q_i)}{\lambda_i}
=
\frac{1+\alpha_i}2,
\qquad
\mathbb P(X'_i=1\mid D'_i=1)
=
\frac{(1-p_i)q_i}{\lambda_i}
=
\frac{1-\alpha_i}2.
\]
In particular, \(|\alpha_i|=a_i\le1\), since \((1\pm\alpha_i)/2\) are
probabilities.
\smallskip%

Let \(\pi\) be the common law of \(D\) and \(D'\). 
Pick \(d\in\{0,1\}^n\) with \(\pi(d)>0\). 
Let \(\tau_d\) be the TV distance
between the conditional law of \(X\) given \(D=d\) and the conditional law
of \(X'\) given \(D'=d\). Splitting the sum in \eqref{hat_d_XD}:
\begin{multline*}
\widehat d
=
\frac12\sum_d\sum_x
\bigl|\mathbb P(X=x,D=d)-\mathbb P(X'=x,D'=d)\bigr|
= \\ 
= \sum_d\pi(d)\,
\frac12\sum_x
\bigl|\mathbb P(X=x\mid D=d)-\mathbb P(X'=x\mid D'=d)\bigr|,
\end{multline*}
that is,
\begin{equation}\label{average_tv}
\widehat d=\mathbb E\,\tau_D.
\end{equation}
Set:
\[
I(d)=\{i:d_i=1\}.
\]
Both conditional laws are product measures. By the second fact, their
factors with \(i\notin I(d)\) coincide; a common factor does not affect
total variation. By the third fact,
\begin{equation}\label{tau_mirror}
\tau_d
=
d_{\mathrm{TV}}(R_d^+,R_d^-),
\qquad
R_d^\pm
=
\bigotimes_{i\in I(d)}\mathrm{Ber}\Bigl(\frac{1\pm\alpha_i}2\Bigr).
\end{equation}
For a single mirror pair,
\[
\mathrm{BC}\Bigl(
\mathrm{Ber}\Bigl(\frac{1+\alpha_i}2\Bigr),
\mathrm{Ber}\Bigl(\frac{1-\alpha_i}2\Bigr)
\Bigr)
=
2\sqrt{\frac{1+\alpha_i}2\cdot\frac{1-\alpha_i}2}
=
\sqrt{1-a_i^2}.
\]
By multiplicativity and Lemma~\ref{lecam},
\begin{equation}\label{Delta_d}
\tau_d
\le
\Delta_d
:=
\sqrt{1-\prod_{i=1}^n(1-a_i^2)^{d_i}}.
\end{equation}
Since 
\[
1-\prod(1-x_i)\le\min\{1,\sum x_i\}\quad 
\text{for \(x_i\in[0,1]\)},
\]
it follows that:
\[
\Delta_d
\le
\min\Bigl\{1,\sqrt{\sum_{i=1}^na_i^2d_i}\Bigr\}.
\]
Averaging over \(D\),
\[
\mathbb E\,\Delta_D \le \mathbb E\min\{1,\sqrt G\}.
\]
From \eqref{average_tv} and Lemma~\ref{doubling}:
\begin{equation}\label{upper_chain}
d_{\mathrm{TV}}
\le
\widehat d
=
\mathbb E\,\tau_D
\le
\mathbb E\,\Delta_D
\le
\mathbb E\min\{1,\sqrt G\},
\end{equation}
and the upper bound follows. \qed

\section{The lower bound}\label{lower_bound}

By \eqref{tau_mirror}, \(\tau_d\) is the TV distance between two products of mirror pairs. We prove the following lower bound, matching \eqref{Delta_d}.
\begin{theorem}\label{dephasing_thm}
Let \(\alpha_1,\ldots,\alpha_m\in[-1,1]\), and set:
\[
R^\pm
=
\bigotimes_{i=1}^m\mathrm{Ber}\Bigl(\frac{1\pm\alpha_i}2\Bigr).
\]
Then:
\[
d_{\mathrm{TV}}(R^+,R^-)
\ge
c\min\left\{
1,
\sqrt{\sum_{i=1}^m\alpha_i^2}
\right\}
\]
for an absolute constant \(c>0\).
\end{theorem}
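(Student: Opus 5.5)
We will prove the lower bound with a test function: a linear statistic, analyzed with the second moment method and Paley--Zygmund. Let \(Z\in\{-1,1\}^m\) have independent coordinates with \(\mathbb P(Z_i=1)=(1+\alpha_i)/2\) under \(R^+\), and \((1-\alpha_i)/2\) under \(R^-\). Under \(R^\pm\) we then have \(\mathbb E Z_i=\pm\alpha_i\) and \(\mathrm{Var}\,Z_i=1-\alpha_i^2\). Write \(s^2=\sum_i\alpha_i^2\). The basic tool is
\[
d_{\mathrm{TV}}(R^+,R^-)\ge \tfrac12\bigl|\mathbb E_{R^+}f-\mathbb E_{R^-}f\bigr|\quad\text{for }|f|\le1 .
\]

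\textbf{Case \(s\le1\).} Take \(f=S/\max\|S\|_\infty\) with the statistic \(S=\sum_i\alpha_iZ_i\); then \(|S|\le\sum|\alpha_i|\), which is too crude. Instead I use the \(L^2\) form of the same idea:
\[
|\mathbb E_+S-\mathbb E_-S|=2s^2,
\]
and by Cauchy--Schwarz on \(|dR^+-dR^-|\),
\[
|\mathbb E_+S-\mathbb E_-S|\le \sqrt{2\,d_{\mathrm{TV}}}\,\sqrt{\mathbb E_+S^2+\mathbb E_-S^2}.
\]
This follows by writing \(\sum_z|R^+-R^-||S|\le\sqrt{\sum|R^+-R^-|}\sqrt{\sum(R^++R^-)S^2}\). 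Since \(\mathbb E_\pm S^2=\sum\alpha_i^2(1-\alpha_i^2)+s^4\le s^2+s^4\le 2s^2\), we get \(4s^4\le 2d_{\mathrm{TV}}\cdot 4s^2\), so \(d_{\mathrm{TV}}\ge s^2/2\). This is quadratic in \(s\) rather than linear, so it needs improvement.

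The improvement is to compute directly with the centered symmetric statistic. Since the two laws are mirror images, \(R^-\) is the law of \(-Z\) when \(Z\sim R^+\). Take \(f=\mathrm{sign}(S)\). Then
\[
\mathbb E_+f-\mathbb E_-f=2\,\mathbb E_+\mathrm{sign}(S)=2\bigl(\mathbb P_+(S>0)-\mathbb P_+(S<0)\bigr),
\]
so it suffices to show \(\mathbb P_+(S>0)-\mathbb P_+(S<0)\gtrsim\min\{1,s\}\), where \(S\) has mean \(s^2\) and variance at most \(s^2\).

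\textbf{Case \(s\ge1\).} Chebyshev gives \(\mathbb P_+(S\le0)\le s^2/s^4=1/s^2\). This handles \(s\ge 2\), say, giving \(d_{\mathrm{TV}}\ge 1-2/s^2\ge 1/2\). Moderate \(s\in[s_0,2]\) is absorbed by monotonicity: \(d_{\mathrm{TV}}\) only decreases when coordinates are dropped, and some subset has \(s\) in the range where the small case applies. If one \(\alpha_i\) is large, that single coordinate already gives \(d_{\mathrm{TV}}\ge|\alpha_i|\).

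\textbf{Case \(s\) small (main obstacle).} The main obstacle is the anticoncentration step: showing that \(\mathbb P(S>0)-\mathbb P(S<0)\gtrsim s\) when the mean is \(s^2\) and the standard deviation is about \(s\). A Gaussian heuristic gives exactly \(\asymp s^2/s=s\), but Berry--Esseen can fail when a few \(\alpha_i\) dominate. My plan is to avoid this by choosing a different test that is linear in \(s\) for small \(s\): the likelihood-ratio approach through Hellinger distance. Lemma~\ref{lecam} gives only an upper bound, so I use the reverse inequality
\[
d_{\mathrm{TV}}\ge 1-\mathrm{BC}.
\]
This gives \(1-\prod\sqrt{1-\alpha_i^2}\gtrsim\min\{1,s^2\}\), which is again quadratic. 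I therefore fall back to showing that \(\mathbb E_+\mathrm{sign}(S)\gtrsim s\) via a symmetrization of each coordinate. Couple \(Z_i=\epsilon_i\) with probability \(1-|\alpha_i|\), where \(\epsilon_i\) is a uniform sign, and \(Z_i=\mathrm{sign}(\alpha_i)\) otherwise. Conditioning on the uniform part, \(S\) is a Rademacher sum plus a positive shift. Anticoncentration of Rademacher sums at scale \(\max|\alpha_i|\) (Kolmogorov--Rogozin or Littlewood--Offord type bounds), treated separately when \(\max|\alpha_i|\ge s/2\), should yield the linear bound. This is the step I expect to need the most care.
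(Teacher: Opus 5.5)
\textbf{There is a genuine gap.} Your reductions are sound: the Chebyshev argument for $s\ge2$, the bound $d_{\mathrm{TV}}\ge|\alpha_i|$ from a single coordinate, and monotonicity under dropping coordinates. Together they reduce the theorem to showing $d_{\mathrm{TV}}(R^+,R^-)\gtrsim s$ for $s\le s_0$. But that small-$s$ case is the whole content of the theorem, and you do not prove it. The Cauchy--Schwarz and Hellinger attempts give only $s^2$, as you note, and the final plan is only sketched (``should yield'').

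The sketched route also points the wrong way. After your coupling, $S=R_U+T$ with $R_U=\sum_{i\in U}\alpha_i\epsilon_i$ symmetric and $T=\sum_{i\notin U}|\alpha_i|\ge0$. Then $\mathbb E_+\mathrm{sign}(S)=\mathbb E\,\mathbb P(-T<R_U\le T\mid U)$. So you need a \emph{lower} bound on the mass a Rademacher sum puts on a window of width about $s^2$ around $0$, when its spread is about $s$. Littlewood--Offord and Kolmogorov--Rogozin bound such small-ball probabilities from \emph{above}, so they cannot supply this. Moreover, for fixed $U$ no such lower bound holds because of lattice effects. For example, take $\alpha_1=\cdots=\alpha_{m-1}=\beta$ with $m-1$ odd, $U^c=\{m\}$ and $|\alpha_m|<\beta$: then $|R_U|\ge\beta>T$, so the window has probability $0$. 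Any such argument must genuinely average over $U$, and you have not supplied one.

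\textbf{The paper's missing idea.} Replace $\mathrm{sign}(S)$ by a smooth bounded test, $f_t=e^{itS}$. Mirror symmetry gives $\mathbb E_-f_t=\overline{\mathbb E_+f_t}$, hence $d_{\mathrm{TV}}\ge|\Im\zeta|$ with $\zeta=\prod_i(\cos t\alpha_i+i\alpha_i\sin t\alpha_i)$. Take $t=\varepsilon\min\{s^{-2},s^{-1}\}$ and expand $\log\zeta=its^2+O(t^2s^2)+iO(t^3s^4)$. This yields $\Im\zeta\ge\tfrac12ts^2=\tfrac\varepsilon2\min\{1,s\}$ for all $s$ at once, with no anticoncentration and no case split.

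\textbf{Rescuing your own statistic.} Your $\mathrm{sign}(S)$ test also works if you use a Walsh--Fourier expansion instead of the coupling. Since $R^+(z)=2^{-m}\prod_i(1+\alpha_iz_i)$, for odd $g$ with $|g|\le1$ you have $\mathbb E_+g=\sum_{A\subseteq[m]}\hat g(A)\prod_{i\in A}\alpha_i$. For $g=\mathrm{sign}(S)$, the level-one part equals $\mathbb E|\sum_i\alpha_i\epsilon_i|\ge s/\sqrt2$ (Khintchine with Szarek's constant). Cauchy--Schwarz and Parseval bound the levels $|A|\ge2$ by $\sqrt{e^{s^2}-1-s^2}\le s^2$ for $s\le1$. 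This gives $d_{\mathrm{TV}}\ge\mathbb E_+g\ge s/\sqrt2-s^2$, which is linear in $s$ for small $s$, and your monotonicity step then finishes the proof.
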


\begin{proof}
Set:
\[
\sigma^2=\sum_{i=1}^m\alpha_i^2.
\]
Assume \(\sigma>0\); otherwise, there is nothing to prove.
\smallskip%

For \(x\in\{0,1\}^m\), set:
\[
z_i=2x_i-1\in\{-1,1\}.
\]
For \(t>0\), consider the test function:
\[
f_t(x)=\exp\Bigl\{it\sum_{i=1}^m\alpha_iz_i\Bigr\}.
\]
Since \(|f_t|=1\),
\[
d_{\mathrm{TV}}(R^+,R^-)
\ge
\frac12
\left|
\sum_xf_t(x)R^+(x)
-
\sum_xf_t(x)R^-(x)
\right|.
\]
Under \(R^+\), the coordinates are independent, and \(z_i=1\) with
probability \((1+\alpha_i)/2\). Hence:
\[
\sum_xf_t(x)R^+(x)
=
\prod_{i=1}^m
\Bigl(
\frac{1+\alpha_i}2\,e^{it\alpha_i}
+
\frac{1-\alpha_i}2\,e^{-it\alpha_i}
\Bigr)
=
\prod_{i=1}^m
\bigl(\cos(t\alpha_i)+i\alpha_i\sin(t\alpha_i)\bigr)
=:
\zeta.
\]
Under \(R^-\), the test function \(f_t\) is unchanged, but \(z_i=1\)
with probability \((1-\alpha_i)/2\). Hence, the \(i\)-th factor becomes:
\[
\frac{1-\alpha_i}2\,e^{it\alpha_i}
+
\frac{1+\alpha_i}2\,e^{-it\alpha_i}
=
\cos(t\alpha_i)-i\alpha_i\sin(t\alpha_i),
\]
and the corresponding sum is \(\bar\zeta\). Consequently,
\begin{equation}\label{imag_lower}
d_{\mathrm{TV}}(R^+,R^-)
\ge
|\Im\zeta|.
\end{equation}
Choose a small absolute \(\varepsilon>0\), and set:
\[
t
=
\varepsilon
\min\left\{
\frac1{\sigma^2},\frac1\sigma
\right\}.
\]
Since \(|\alpha_i|\le\min\{1,\sigma\}\),
\[
|t\alpha_i|\le\varepsilon,
\qquad
t\sigma^2=\varepsilon\min\{1,\sigma\},
\qquad
t^2\sigma^2\le\varepsilon^2.
\]
Taylor expansion gives:
\[
\log\bigl(\cos(t\alpha_i)+i\alpha_i\sin(t\alpha_i)\bigr)
=
it\alpha_i^2
+
O(t^2\alpha_i^2)
+
iO(t^3\alpha_i^4),
\]
where the \(O(t^2\alpha_i^2)\)-term is real. Summing
over \(i\), and using 
\[
\sum_i\alpha_i^4\le\sigma^4,
\]
we get:
\[
\log\zeta
=
it\sigma^2
+
O(t^2\sigma^2)
+
iO(t^3\sigma^4).
\]
Since \(t^2\sigma^2\le\varepsilon^2\),
\[
\log\zeta
=
O(\varepsilon^2)
+
it\sigma^2\bigl(1+O(\varepsilon^2)\bigr).
\]
Hence:
\[
|\zeta|
=
e^{O(\varepsilon^2)}
=
1+O(\varepsilon^2),
\qquad
\arg\zeta
=
t\sigma^2\bigl(1+O(\varepsilon^2)\bigr).
\]
Since \(t\sigma^2\le\varepsilon\),
\[
\sin(\arg\zeta)
=
t\sigma^2\bigl(1+O(\varepsilon^2)\bigr).
\]
Consequently,
\[
\Im\zeta
=
|\zeta|\sin(\arg\zeta)
=
t\sigma^2\bigl(1+O(\varepsilon^2)\bigr).
\]
Choose \(\varepsilon>0\) so small that the last factor is at least
\(1/2\). Then:
\[
\Im\zeta
\ge
\frac12\,t\sigma^2
=
\frac\varepsilon2\min\{1,\sigma\},
\]
and the proof follows. \qed
\end{proof}

\section{Proof of Theorem~\ref{main}}\label{proof_main}

The upper bound is \eqref{upper_chain}. For the lower bound, let
\(d\in\{0,1\}^n\) with \(\pi(d)>0\). Set:
\[
G_d=\sum_{i=1}^na_i^2d_i.
\]
Applying Theorem~\ref{dephasing_thm} to
\((\alpha_i)_{i\in I(d)}\),
\[
\tau_d\ge c\min\{1,\sqrt{G_d}\}.
\]
Averaging over \(D\), 
\[
\mathbb E\,\tau_D \ge c\, \mathbb E \min\{1, \sqrt{G}\}.
\]
From Lemma~\ref{doubling}:
\[
d_{\mathrm{TV}}
\ge
\frac12\,\widehat d
=
\frac12\,\mathbb E\,\tau_D,
\]
and the proof follows. \qed

\bibliographystyle{plain}
\bibliography{ref}

\end{document}